\newcommand{\bbfont}{\mathbbm}
\newcommand{\TT}{{\bbfont T}}
\newcommand{\ZZ}{{\bbfont Z}}
\newcommand{\upC}{{\mathrm{C}}}
\newcommand{\braces}[1]{{\{ #1\}}}
\newcommand{\lrbraces}[1]{{\left\{ #1\right\}}}
\newcommand{\lrpars}[1]{{\left( #1\right)}}
\newcommand{\biggbraces}[1]{{\bigg\{ #1\bigg\}}}
\newcommand{\set}[1]{\braces{\,#1\,}}
\newcommand{\lrset}[1]{\lrbraces{\,#1\,}}
\newcommand{\biggset}[1]{\biggbraces{\,#1\,}}
\newcommand{\cont}{{\upC}}
\newcommand{\Calgebra}{\ensuremath{{\upC}^\ast}-algebra}
\newcommand{\Calgebras}{\ensuremath{{\upC}^\ast}-algebras}
\newcommand{\onefunction}{{\mathbf 1}}
\newcommand{\zerofunction}{{\mathbf 0}}
\newcommand{\Ker}{\operatorname{Ker}}
\theoremstyle{plain}
\newtheorem{theorem}{Theorem}[section]
\newtheorem{proposition}[theorem]{Proposition}
\newtheorem{corollary}[theorem]{Corollary}
\newtheorem*{theorem*}{Theorem}
\newtheorem*{proposition*}{Proposition}
\newtheorem*{lemma*}{Lemma}
\newtheorem*{corollary*}{Corollary}
\newtheorem*{conjecture*}{Conjecture}
\theoremstyle{definition}
\newtheorem{remark}[theorem]{Remark}
\newtheorem*{definition*}{Definition}
\newtheorem*{example*}{Example}
\newtheorem*{remark*}{Remark}
\newtheorem*{assumption*}{Assumption}
\setlist[enumerate,1]{label=\textup{(\arabic*)},ref=\arabic*}
\setlist[enumerate,2]{label=\textup{(\alph*)},ref=\arabic{enumi}.\alph*}
\setlist[enumerate,3]{label=\textup{(\roman*)},ref=\arabic{enumi}.\alph{enumii}.\roman*}
\setlist[enumerate,4]{label=\textup{(\Alph*)},ref=\arabic{enumi}.\alph{enumii}.\roman{enumiii}.\Alph*}
\crefname{theorem}{Theorem}{Theorems}
\crefname{proposition}{Proposition}{Propositions}
\crefname{lemma}{Lemma}{Lemmas}
\crefname{corollary}{Corollary}{Corollaries}
\crefname{conjecture}{Conjecture}{Conjectures}
\crefname{definition}{Definition}{Definitions}
\crefname{example}{Example}{Examples}
\crefname{remark}{Remark}{Remarks}
\crefname{assumption}{Assumption}{Assumptions}
\crefname{equation}{equation}{equations}
\crefname{enumi}{part}{parts}
\crefname{enumii}{part}{parts}
\crefname{enumiii}{part}{parts}
\crefname{enumiv}{part}{parts}
\newcommand{\enclosepart}[1]{(#1)}
\newcommand{\partref}[1]{\enclosepart{\ref{#1}}}
\numberwithin{equation}{section}
\newcommand{\ellone}{\ell^1(\Sigma)}
\newcommand{\cstar}{{\upC}^\ast(\Sigma)}
\newcommand{\perpoints}{\mathrm{Per}(\sigma)}
\newcommand{\aperpoints}{\mathrm{Aper}(\sigma)}
\newcommand{\elloneclosure}[1]{{\overline{#1}}{\vphantom{#1}}^{\ell^1}\!\!}
\newcommand{\cstarclosure}[1]{\overline{#1}{\vphantom{#1}}^{{\mathrm C}^\ast}\!\!}
\newcommand{\bounded}{\mathrm{B}}
\newcommand{\coeffalg}{\cont(X)}
\begin{document}


\title [Closure of ideals]{The closure of ideals of $\boldsymbol{\ell^1(\Sigma)}$ in its enveloping $\boldsymbol{\mathrm{C}^\ast}$-algebra}

\author{Marcel de Jeu}
\address{Marcel de Jeu, Mathematical Institute, Leiden University, P.O.\ Box 9512, 2300 RA Leiden, the Netherlands}
\email{mdejeu@math.leidenuniv.nl}

\author{Jun Tomiyama}
\address{Jun Tomiyama, Department of Mathematics, Tokyo Metropolitan University, Minami-Osawa, Hachioji City, Japan}
\email{juntomi@med.email.ne.jp}




\keywords{Involutive Banach algebra, enveloping ${\mathrm{C}}^\ast$-algebra, ideal, topological dynamical system}

\subjclass[2010]{Primary 46K99; Secondary 46H10, 47L65, 54H20}


\begin{abstract}
If~$X$ is a compact Hausdorff space and~$\sigma$ is a homeomorphism of~$X$, then an involutive Banach algebra $\ell^1(\Sigma)$ of crossed product type is naturally associated with the topological dynamical system $\Sigma=(X,\sigma)$. We initiate the study of the relation between two-sided ideals of $\ell^1(\Sigma)$ and  ${\mathrm C}^\ast(\Sigma)$, the enveloping $\mathrm{C}^\ast$-algebra ${\mathrm C}(X)\rtimes_\sigma
\mathbb Z$ of $\ell^1(\Sigma)$.  Among others, we prove that the closure of a proper two-sided ideal of $\ell^1(\Sigma)$ in  ${\mathrm C}^\ast(\Sigma)$ is again a proper two-sided ideal of  ${\mathrm C}^\ast(\Sigma)$.
\end{abstract}

\maketitle


\section{Introduction and overview}\label{sec:introduction}
If~$X$ is a compact Hausdorff space and $\sigma:X\to X$ is a homeomorphism of~$X$, then an involutive Banach algebra $\ell^1(\Sigma)$ of crossed product type can be associated with this dynamical system $\Sigma=(X,\sigma)$; we shall recall the definition in Section~\ref{sec:two_algebras}. Its ${\upC}^\ast$-enveloping algebra, denoted by $\cstar$, is the crossed product $\upC^\ast$-algebra ${\mathrm C}(X)\rtimes_\sigma
\mathbb Z$. Whereas $\cstar$ is well-studied, the investigation of $\ellone$ itself is of a more recent nature; this has been taken up in \cite{de_jeu_svensson_tomiyama:2012}, \cite{de_jeu_tomiyama:2012}, \cite{de_jeu_tomiyama:2013b}, and \cite{de_jeu_tomiyama:2016}. The algebra $\ellone$ is more complicated than $\cstar$. For example, it can occur that $\ellone$ has a non-selfadjoint closed two-sided ideal (see \cite[Theorem~4.4]{de_jeu_svensson_tomiyama:2012}), whereas this is, of course, never the case for $\cstar$.

The study of $\ellone$ so far has proceeded without using what is known about $\cstar$. Still, it has turned out that some analogous properties of $\ellone$ and $\cstar$ are equivalent. For example, these algebras are either both simple (i.e.\ have only trivial closed two-sided ideals), or both non-simple. The proof of this fact proceeds via the properties of~$\Sigma$: for each of these algebras, this simplicity can be shown to~$X$ being an infinite set and $\Sigma$ being minimal, i.e.\ to~$X$ being an infinite set that has only trivial invariant closed subsets. Hence the simplicity of one algebra also implies the simplicity of the other. It is desirable to have some basic results, formulated directly in terms of the algebras and not using the properties of the dynamical system, that can help transfer a property of the ideal structure of one algebra to an analogous property of the ideal structure of the other algebra. As the example of non-selfadjoint closed two-sided ideals makes clear, one cannot expect to be able to do this in all cases, but one would hope that something in this direction is still possible.

The present paper contains the first steps in this direction. We show that the closure in $\cstar$ of a proper not necessarily closed two-sided ideal of $\ellone$ is still a proper two-sided ideal of $\cstar$; see \cref{res:proper_closure}. We also investigate what the necessary and sufficient condition is so that all closed two-sided ideals of $\ellone$ can be reconstructed from their closure in $\cstar$ by taking the intersection with $\ellone$ again. The latter is possible if and only if the~$\ZZ$-action on~$X$ is free, i.e.\ if and only if there are no periodic points of~$\sigma$ in~$X$; see \cref{res:intersection_closure}.

As a (now immediate) example of how such a relation between two-sided ideals of $\ellone$ and $\cstar$ can be exploited, we re-establish the fact that the minimality of~$\Sigma$ implies the simplicity of $\ellone$, based on the knowledge that this is true for $\cstar$; see \cref{res:simplicity} for a more elaborate statement.

This paper is organized as follows.

In Section~\ref{sec:two_algebras}, we introduce $\ellone$ and its enveloping \Calgebra\ $\cstar$.

In Section~\ref{sec:results}, we establish our main results. The key idea in this section is that there are some proper two-sided ideals of $\ellone$ for which it is immediately obvious that their closure in $\cstar$ is still a proper two-sided ideal of $\cstar$, and that they can be retrieved as the intersection of their closure in $\cstar$ with $\ellone$. As is the case for all unital involutive Banach algebras that are a subalgebra of their enveloping \Calgebra, all kernels of non-zero involutive representations of $\ellone$ on Hilbert spaces have these two properties. The main technical result in this section is then to show that all primitive ideals (in the purely algebraic sense to be recalled in Section~\ref{sec:results}) of $\ellone$ are such kernels.

\section{Two algebras}\label{sec:two_algebras}

In this section, we introduce the two algebras playing a role in this paper, and we recall a few standard facts about their relation.  We also introduce notation for the two closure operations figuring in this context, and state our convention concerning ideals of an algebra.

Throughout this paper,~$X$ is a non-empty compact Hausdorff space and $\sigma:X\to X$ is a homeomorphism. Hence~$\ZZ$ acts on~$X$, and we write~$\Sigma$ for short for the topological dynamical system $(X,\sigma)$. We let $\aperpoints$ and $\perpoints$ denote the aperiodic and the periodic points of~$\sigma$, respectively.  A subset~$S$ of~$X$ is
\emph{invariant} if it is invariant under the~$\ZZ$-action, i.e.\  if $\sigma(S)=S$.
The involutive algebra of continuous (complex-valued) functions on~$X$ is denoted by $\coeffalg$, and we write $\alpha$ for the involutive automorphism of $\coeffalg$ induced by~$\sigma$, defined by $\alpha(f) = f \circ \sigma^{-1}$ for $f \in\coeffalg$. Via $n \mapsto \alpha^n$,~$\ZZ$ acts on $\coeffalg$.

With $\Vert\cdot\Vert$ denoting the supremum norm on $\coeffalg$, we let
\[
\ellone = \ell^1(\ZZ,\coeffalg)=\biggset{a: \ZZ \to\coeffalg : \Vert a \Vert := \sum_{n\in\ZZ} \Vert a(n)\Vert < \infty}.
\]
We supply the Banach space $\ellone$ with the usual twisted convolution as multiplication, defined by
\[
(aa^\prime) (n) = \sum_{k\in\ZZ}  a(k) \cdot \alpha^k (a^\prime(n-k))
\]
for $n\in\ZZ$ and $a, a^\prime \in \ellone$, and define an involution on $\ellone$ by
\[
a^* (n) = \overline{\alpha^n (a(-n))}
\]
for $n\in\ZZ$ and $a\in\ellone$. Thus $\ellone$ becomes a unital Banach $^\ast$-algebra with isometric involution. If~$X$ consists of one point, then $\ellone$ is the group algebra $\ell^1(\ZZ)$ of the integers.

There is a convenient way to work with $\ellone$, which we shall now explain. For $n,m \in \mathbb{Z}$, let
\[
 \chi_{\set{n}} (m) =
 \begin{cases}
   \onefunction &\text{if }m =n;\\
   \zerofunction &\text{if }m \neq n,
 \end{cases}
\]
where the constants denote the corresponding constant functions on~$X$. Then $\chi_{\set{0}}$ is the identity element of $\ellone$. Let $\delta = \chi_{\set{1}}$; then one sees easily that $\chi_{\set{-1}}=\delta^{-1}=\delta^*$. If we put $\delta^0=\chi_{\set{0}}$, then one computes that $\delta^n = \chi_{\set{n}}$ for all $n \in \mathbb{Z}$. We may view $\coeffalg$ as a closed abelian $^\ast$-subalgebra of $\ellone$, namely as $\set{a_0 \delta^0 \, : \, a_0 \in \coeffalg}$. If $a \in \ellone$, and if we write $f_n$ for  $a(n)$ as a more intuitive notation, then $a= \sum_{n\in\ZZ}f_n\delta^n$ and $\Vert a \Vert=\sum_{n\in\ZZ} \Vert f_n\Vert<\infty$.
In the rest of this paper, we shall constantly use this series representation $a= \sum_{n\in\ZZ}f_n\delta^n$ of an arbitrary element $a\in\ellone$, with uniquely determined $f_n\in\coeffalg$ for $n\in\ZZ$. Thus, all in all, $\ellone$ is generated, as a unital Banach algebra, by an isometrically isomorphic copy of $\coeffalg$ and the elements $\delta$ and $\delta^{-1}$, subject to the relation $\delta f \delta^{-1}=\alpha (f)= f\circ\sigma^{-1}$ for $f \in\coeffalg$. The isometric involution is determined by $f^*=\overline f$ for $f\in\coeffalg$ and by $\delta^*=\delta^{-1}$.

Since $\ellone$ is a unital Banach algebra with an isometric involution, it has an enveloping \Calgebra\ as constructed in \cite[Section~2.7]{dixmier_C-STAR-ALGEBRAS_ENGLISH_NORTH_HOLLAND_EDITION:1977}. We denote this enveloping ${\upC}^\ast$-algebra of $\ellone$ by $\cstar$. As in the general construction of crossed products of $\upC^\ast$-algebras (see \cite{williams_CROSSED_PRODUCTS_OF_C-STAR-ALGEBRAS:2007}), the enveloping ${\upC}^\ast$-seminorm on $\ellone$ is actually a norm; in this particular case, a somewhat shorter argument can also be used to see this (see \cite[p.~51]{de_jeu_tomiyama:2012}). Hence $\ellone$ can be viewed as a dense subalgebra of $\cstar$, and the inclusion of $\ellone$ into $\cstar$ is continuous (even contractive).

We infer from \cite[Proposition~1.3.7]{dixmier_C-STAR-ALGEBRAS_ENGLISH_NORTH_HOLLAND_EDITION:1977}  that a not necessarily unital involutive representation of $\ellone$ or $\cstar$ as bounded operators on a Hilbert space is always continuous (even contractive). As another standard fact, let us note that \cite[Proposition~2.7.4]{dixmier_C-STAR-ALGEBRAS_ENGLISH_NORTH_HOLLAND_EDITION:1977} shows that every not necessarily unital involutive representation of $\ellone$ extends uniquely to an involutive representation of $\cstar$, and that a bijection between the collections of involutive representations of the two algebras is thus obtained.

If $S\subset\ellone$, then $\elloneclosure{S}$ denotes its closure in $\ellone$. If $S\subset\cstar$, then $\cstarclosure{S}$ denotes its closure in $\cstar$.

Although we have written `two-sided ideal' in the abstract and in Section~\ref{sec:introduction} to avoid any possible misunderstanding, in the sequel of this paper, an \emph{ideal} of an algebra is always a two-sided ideal, unless otherwise stated. It need not (if applicable) be closed.

\section{Results}\label{sec:results}

In this section, we take up the study of the relation between the ideals of $\ellone$ and $\cstar$. We shall show that the closure $\cstarclosure{I}$ of a proper ideal of $\ellone$ is still a proper ideal of $\cstar$; see \cref{res:proper_closure}. Furthermore, we shall investigate when closed ideals of $\ellone$ can be recovered from their closure in $\cstar$ in the most obvious fashion; see \cref{res:intersection_of_primitive_ok,res:intersection_closure}. The key technical result of this section is the fact that every primitive ideal of $\ellone$ is the kernel of a non-zero involutive representation of $\ellone$; see \cref{res:primitive_and_involutive}.

The proof of \cref{res:proper_closure} can be reduced to a particular case by the following purely algebraic argument.

We recall that, if $A$ is an algebra, an \emph{algebraically irreducible representation of $A$} is a non-zero homomorphism into the linear operators on a vector space $E$ over the pertinent field such that $E$ has only trivial invariant subspaces. A \emph{primitive ideal of $A$} is the kernel of an algebraically irreducible representation of $A$. It was already noted by Jacobson (see \cite{jacobson:1945}) that a maximal ideal~$M$ of a unital algebra $A$ is a primitive ideal. Since it is this fact that allows us to make a reduction that is instrumental for the proof of \cref{res:proper_closure}, we recall the short argument, which is as follows. Since $A$ is unital, the proper left ideal~$M$ is contained in a maximal left ideal~$I$. The representation of $A$ on $A/I$ is then algebraically irreducible, so that $\Ker(\pi)$ is a primitive ideal. Since~$M$ is also a right ideal of $A$, one has $M\subset\Ker(\pi)$. By the maximality of~$M$, we conclude that $M=\Ker(\pi)$. Hence~$M$ is a primitive ideal, as desired. Since in a unital algebra every proper ideal is contained in a maximal ideal, we see that in a unital algebra every proper ideal is contained in a primitive ideal.

As a consequence of these well-known results, if we want to prove that $\cstarclosure{I}$\, is a proper subset of $\cstar$ for every proper ideal~$I$ of $\ellone$, then it is sufficient to do this when~$I$ is a primitive ideal of $\ellone$. In order to fully exploit this reduction, it is clearly important to have more information about the primitive ideals of $\ellone$, and we shall now set out to collect the relevant facts from previous work. Before doing so, however, let us note that, in the literature on involutive Banach algebras, a primitive ideal is often defined as the kernel of a topologically irreducible (i.e.\ having only trivial closed invariant subspaces) non-zero involutive representation of the pertinent algebra on a Hilbert space. This was also the notion employed by the authors in \cite{de_jeu_tomiyama:2013b}, but in \cite{de_jeu_tomiyama:2016} the purely algebraic notion as in the present paper was used. For \Calgebras, there is no difference (this follows from the combination of \cite[Corollary~2.8.4]{dixmier_C-STAR-ALGEBRAS_ENGLISH_NORTH_HOLLAND_EDITION:1977} and \cite[Corollary~2.9.6.(i)]{dixmier_C-STAR-ALGEBRAS_ENGLISH_NORTH_HOLLAND_EDITION:1977}), but otherwise a little care is in order when using results (including those of the authors) for primitive ideals of involutive Banach algebras as found in the literature.

We start by describing a family of finite dimensional algebraically irreducible representations (hence also of primitive ideals) that are associated with periodic points of~$\sigma$. When combined with the standard relation between the involutive representations of $\ellone$ and $\cstar$ on Hilbert spaces, the existence of these representations also follows from general considerations for crossed products of $\upC^\ast$-algebras (see \cite{williams_CROSSED_PRODUCTS_OF_C-STAR-ALGEBRAS:2007}), but the direct definition below suffices for our needs. More can be said about these representations of $\ellone$ than we shall include here, and we refer to \cite[Section~3.2]{de_jeu_tomiyama:2016} for further information.

For $x\in\perpoints$ and $\lambda\in\TT$, we define a representation $\pi_{x,\lambda}$ of $\ellone$ as follows. Let~$p$ be the period of~$X$. Let $H_{x,\lambda}$ be a Hilbert space with orthonormal basis $\set{e_0,\ldots,e_{p-1}}$ and bounded operators $\bounded(H_{x,\lambda})$. We let $T_\lambda\in\bounded(H_{x,\lambda})$ be the bounded linear operator on $H_{x,\lambda}$ that is represented with respect to this basis by the matrix
\[
\left(
\begin{array}{ccccc}
0 & 0 & \ldots & 0 & \lambda \\
1 & 0 & \ldots & 0 & 0 \\
0 & 1 & \ldots & 0&0\\
\vdots & \vdots & \ddots &\vdots &\vdots \\
0 & 0 & \ldots & 1&0
\end{array}\right).
\]
For $f\in\coeffalg$, we let $\rho_{x }(f)$ be the bounded linear operator on $H_{x,\lambda}$ that is represented with respect to this basis by the matrix
\[
\left(
\begin{array}{cccc}
f({x }) & 0 & \ldots & 0 \\
0 & f (\sigma {x }) & \ldots & 0 \\
\vdots & \vdots & \ddots & \vdots \\
0 & 0 & \ldots & f(\sigma^{p-1} {x })
\end{array} \right).
\]
It is easily checked (we refer to \cite[Lemma~3.1.2]{de_jeu_tomiyama:2016} for details), that there exists a unique involutive representation $\pi_{x ,\lambda}:\ellone\to\bounded(H_{x ,
\lambda})$ such that $\pi_{x ,\lambda}\upharpoonright_{\coeffalg}=\rho_{x }$ and $\pi(\delta)=T_\lambda$.

In \cite[Theorem~3.5]{de_jeu_tomiyama:2016}, the finite dimensional algebraically irreducible representations of $\ellone$ are classified. Amongst others, this classifications shows, somewhat surprisingly, that the Hilbert space context is automatic for finite dimensional algebraically irreducible representations of $\ellone$. This is the first part of the following result. The description of the intersection of primitive ideals in \cref{eq:intersection_of_kernels_of_finite_dimensional_representation_for_periodic_point}   follows from an explicit description of $\Ker(\pi_{x,\lambda})$ and the injectivity of the Fourier transform on $\ell^1(\ZZ)$; see \cite[Proposition~2.10]{de_jeu_tomiyama:2013b}.

\begin{proposition}\label{res:involutive_periodic_point}
If~$\pi$ is a finite dimensional algebraically irreducible representation of $\ellone$, then there exist $x\in\perpoints$ and $\lambda\in\TT$ such that~$\pi$ and $\pi_{x,\lambda} $ are algebraically equivalent. In particular, $\Ker(\pi)=\Ker(\pi_{x,\lambda})$. Hence every primitive ideal of $\ellone$ that arises as the kernel of a finite dimensional algebraically irreducible representation of $\ellone$ is also the kernel of a topologically irreducible non-zero involutive representation of $\ellone$ on a Hilbert space, and it is a selfadjoint ideal.

Furthermore,
\begin{equation}\label{eq:intersection_of_kernels_of_finite_dimensional_representation_for_periodic_point}
\bigcap_{\lambda\in\TT}\Ker(\pi_{x,\lambda})=\biggset{\sum_{n\in\ZZ}f_n\delta^n\in\ellone : f_n(\sigma^k x)=0 \textup{ for all } n,k\in\ZZ}.
\end{equation}
\end{proposition}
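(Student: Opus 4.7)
The plan is to handle the two parts of the proposition separately: first, classify the finite-dimensional algebraically irreducible representations of $\ellone$; then compute the intersection of the kernels $\Ker(\pi_{x,\lambda})$ explicitly.

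For the classification, suppose $\pi\colon\ellone\to\End(V)$ is a finite-dimensional algebraically irreducible representation. Because $\pi(\coeffalg)$ is commutative and $V$ is finite-dimensional, $V$ decomposes as a direct sum $V=\bigoplus_y V_y$ of generalized joint eigenspaces for $\pi(\coeffalg)$, indexed by characters of $\coeffalg$, which identify with points $y\in X$. The relation $\delta f=\alpha(f)\delta$ in $\ellone$ gives $\pi(\delta)V_y\subset V_{\sigma(y)}$, and since $V$ is finite-dimensional, the orbit under $\sigma$ of any $y$ with $V_y\ne 0$ must be finite. So one obtains some $x\in\perpoints$ with $V_x\ne 0$; writing $p$ for its period, the subspace $\bigoplus_{k=0}^{p-1}V_{\sigma^k x}$ is nonzero and $\pi(\ellone)$-invariant, hence equal to $V$ by irreducibility.

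The second and more delicate step is to show that each $V_k:=V_{\sigma^k x}$ is one-dimensional and that $\pi(f)$ acts on it as the scalar $f(\sigma^k x)$. Pick $v_0\ne 0$ in $V_0$ and set $v_k:=\pi(\delta)^k v_0$; each $v_k$ generates $V_k$ cyclically over the commutative finite-dimensional algebra $\mathcal{A}_k:=\pi(\coeffalg)|_{V_k}$, whose only character is evaluation at $\sigma^k x$, so $\mathcal{A}_k=\CC\, I_{V_k}\oplus\mathcal{N}_k$ with nilpotent radical $\mathcal{N}_k=\{\pi(f)|_{V_k}:f(\sigma^k x)=0\}$. I would verify that $W:=\bigoplus_k\mathcal{N}_k v_k$ is a proper $\pi(\ellone)$-invariant subspace of $V$: invariance under $\pi(\coeffalg)$ is by commutativity of $\mathcal{A}_k$, invariance under $\pi(\delta)$ follows from the intertwining that sends $\mathcal{N}_k$ into $\mathcal{N}_{k+1}$, and properness holds because $v_k\notin\mathcal{N}_k v_k$ (else iterated nilpotence forces $v_k=0$). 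Algebraic irreducibility then gives $W=0$, so each $V_k$ is one-dimensional. A suitable rescaling of $\{v_0,\ldots,v_{p-1}\}$ puts $\pi(\delta)$ in the cyclic matrix form of $T_\lambda$ with $\lambda\in\CC$ nonzero and determined by $\pi(\delta)^p=\lambda I$; that $\lambda\in\TT$ comes from the Banach algebra structure, since $\|\delta^n\|=1$ for every $n\in\ZZ$ forces the spectrum of $\delta$ in $\ellone$ to lie in $\TT$, and hence so does the spectrum of $\pi(\delta)$, which consists of the $p$-th roots of $\lambda$. This yields $\pi\sim\pi_{x,\lambda}$, and the remaining assertions (involutivity on a Hilbert space, topological irreducibility in finite dimensions, selfadjointness of the kernel) then follow from the explicit form of $\pi_{x,\lambda}$.

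For the intersection formula, I would compute $\pi_{x,\lambda}(a)$ explicitly for $a=\sum_n f_n\delta^n$. Using $T_\lambda^p=\lambda I$, one has $T_\lambda^n e_k=\lambda^{\lfloor(n+k)/p\rfloor}e_{(n+k)\bmod p}$, and combined with $\sigma^p x=x$ this yields for the $(j,k)$-matrix coefficient of $\pi_{x,\lambda}(a)$ the Laurent series
\[
\sum_{m\in\ZZ}f_{mp+j-k}(\sigma^j x)\,\lambda^m.
\]
Requiring this to vanish for every $\lambda\in\TT$ and invoking the injectivity of the Fourier transform on $\ell^1(\ZZ)$ gives $f_{mp+j-k}(\sigma^j x)=0$ for all $m\in\ZZ$ and $0\le j,k<p$; as $mp+j-k$ ranges over all of $\ZZ$ and $\{\sigma^j x:0\le j<p\}$ is the full orbit of $x$, this is exactly the condition $f_n(\sigma^k x)=0$ for every $n,k\in\ZZ$. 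The hardest part is the irreducibility argument pinpointing the invariant subspace $W$: one must carefully verify its invariance under all of $\pi(\ellone)$ (not merely under $\pi(\coeffalg)$ or $\pi(\delta)$ separately) and its properness. A secondary subtle point is that the conclusion $\lambda\in\TT$, rather than merely $\lambda\in\CC\setminus\{0\}$, genuinely requires the Banach algebra norm on $\ellone$ and not purely algebraic input.
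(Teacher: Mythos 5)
Your overall strategy---decompose $V$ into generalized joint eigenspaces for $\pi(\coeffalg)$, use $\pi(\delta)V_y\subset V_{\sigma(y)}$ to force periodicity, reduce to one-dimensionality of the eigenspaces, and pin down $\lambda\in\TT$ via the spectrum of $\delta$---is sound, and your computation of the matrix coefficients of $\pi_{x,\lambda}(a)$ combined with uniqueness of Fourier coefficients does yield \eqref{eq:intersection_of_kernels_of_finite_dimensional_representation_for_periodic_point} correctly. Note that the paper itself does not prove this proposition: it quotes the classification from \cite[Theorem~3.5]{de_jeu_tomiyama:2016} and the intersection formula from \cite[Proposition~2.10]{de_jeu_tomiyama:2013b}, so your attempt is necessarily a reconstruction, and the comparison below is with what a correct self-contained argument requires.

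The step you yourself flag as delicate contains two genuine gaps. First, irreducibility gives $\pi(\ellone)v_0=V$, and projecting onto $V_k$ one only finds that $v_k$ is cyclic for the algebra generated by $\mathcal{A}_k$ \emph{and} $\pi(\delta^p)|_{V_k}$, not for $\mathcal{A}_k$ alone; the claim that ``$v_k$ generates $V_k$ cyclically over $\mathcal{A}_k$'' is unjustified, and it is exactly what you need at the end to pass from $\mathcal{N}_kv_k=0$ to $\dim V_k=1$. Second, the wrap-around step in the $\pi(\delta)$-invariance of $W=\bigoplus_{k=0}^{p-1}\mathcal{N}_kv_k$ fails: since $\sigma^px=x$, conjugation by $\pi(\delta)$ carries $\mathcal{N}_{p-1}$ onto $\mathcal{N}_0$, so $\pi(\delta)\mathcal{N}_{p-1}v_{p-1}=\mathcal{N}_0\,\pi(\delta^p)v_0$, and there is no reason for this to lie in $\mathcal{N}_0v_0$, because $\pi(\delta^p)v_0$ need not be a multiple of $v_0$. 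Both problems have the same cure: bring $D:=\pi(\delta^p)$ into the argument. One has $DV_0=V_0$ and $D\mathcal{N}_0D^{-1}=\mathcal{N}_0$ (again because $\sigma^px=x$), and the algebra $\mathcal{B}_0$ generated by $\mathcal{A}_0$ and $D^{\pm1}$ acts irreducibly on $V_0$ (an invariant subspace $U$ would give the $\pi(\ellone)$-invariant subspace $\bigoplus_k\pi(\delta)^kU$ of $V$, using the automatic continuity of $\pi$ to pass from the generators to all of $\ellone$). Burnside then gives $\mathcal{B}_0=\End(V_0)$, and $\mathcal{B}_0\mathcal{N}_0$ is a two-sided ideal with $(\mathcal{B}_0\mathcal{N}_0)^m\subset\mathcal{B}_0\mathcal{N}_0^m=0$, so $\mathcal{N}_0=0$; hence $\mathcal{A}_0=\CC I$, $\mathcal{B}_0$ is commutative, and $\dim V_0=1$. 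With that replacement the remainder of your argument, including the identification of $\lambda\in\TT$ and the Fourier-uniqueness computation, goes through.
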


\begin{remark}
Given the algebraic irreducibility, the topological irreducibility of the finite dimensional representations in \cref{res:involutive_periodic_point} is, of course, immediate. We have nevertheless included it, since an analogous statement is also true when this is less obvious (see part~\partref{aperiodic_point_kernel_of_involutive_representation} of  \cref{res:infinite_dimensional}). Although it is not relevant for the proofs of \cref{res:proper_closure},  \cref{res:intersection_of_primitive_ok}, or \cref{res:intersection_closure}, the presence of this topological irreducibility seems too remarkable not to include it; see also part~\partref{kernel_of_topologically_irreducible_involutive_representation} of \cref{res:primitive_and_involutive}.
\end{remark}

Now that all primitive ideals corresponding to finite dimensional algebraically irreducible representations have been described, and have been related to involutive representations, we turn to the infinite dimensional case. In \cite{de_jeu_tomiyama:2016} it was repeatedly used that the representation space of an algebraically irreducible representation of $\ellone$ can always be normed in such a way that the algebra acts as continuous operators and that the representation is a bounded map. This is an immediate consequence (see e.g.\ \cite[proof of Lemma~25.2]{bonsall_duncan_COMPLETE_NORMED_ALGEBRAS:1973}) of the fact that a maximal left ideal in a unital Banach algebra is closed. Combining this normability (which also shows that primitive ideals of $\ellone$ are closed) with \cite[Propositions~2.6,~4.2, and~4.17]{de_jeu_tomiyama:2016} yields the following.

\begin{proposition}\label{res:kernel_of_infinite_dimensional_algebraically_irrreducible_representation}
If~$\pi$ is an infinite dimensional algebraically irreducible representation of $\ellone$, then there exists an infinite invariant subset~$S$ of~$X$ such that
\begin{equation}\label{eq:kernel_of_infinite_dimensional_algebraically_irreducible_representation}
\Ker(\pi)=\biggset{\sum_{n\in\ZZ} f_n\delta^n\in\ellone: f_n\upharpoonright_{S}=0\textup{ for all }n\in\ZZ}.
\end{equation}
Hence $\Ker(\pi)$ is a self-adjoint ideal.

If~$X$ is metrizable, then there exists $x\in\aperpoints$ such that the description of~$I$ in \cref{eq:kernel_of_infinite_dimensional_algebraically_irreducible_representation} holds with $S=\set{\sigma^n x : n\in\ZZ}$.
\end{proposition}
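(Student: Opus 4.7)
The plan is to identify an invariant closed subset $S\subseteq X$ via $\Ker(\pi)\cap\coeffalg$, and then to establish (i) the easy inclusion that every element with all Fourier coefficients vanishing on $S$ lies in $\Ker(\pi)$, (ii) the converse, and (iii) the infinitude of $S$. By the normability principle for algebraically irreducible representations of a unital Banach algebra recalled just above the statement, I may regard $\pi$ as a bounded representation on a normed space $E$, which at the same time guarantees that $\Ker(\pi)$ is closed in $\ellone$. Set $J := \Ker(\pi)\cap\coeffalg$; this is a closed ideal of $\coeffalg$, so by Gelfand duality $J = \{f\in\coeffalg : f\upharpoonright_S = 0\}$ for some closed subset $S\subseteq X$. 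The commutation relation $\delta f\delta^{-1} = \alpha(f)$ in $\ellone$, after applying $\pi$, gives $\alpha(J)\subseteq J$, whence $\sigma(S)=S$. The easy inclusion is then immediate: if $a=\sum_n f_n\delta^n$ has $f_n\upharpoonright_S=0$ for every $n$, each $f_n\in J\subseteq\Ker(\pi)$, so $\pi(f_n\delta^n)=\pi(f_n)\pi(\delta)^n=0$, and boundedness of $\pi$ together with $\ell^1$-convergence of the series yield $\pi(a)=0$.

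The main obstacle is the reverse inclusion together with the infinitude of $S$. The gauge action of $\TT$ on $\ellone$ would recover the individual Fourier coefficients if $\pi$ were $\TT$-equivariant, but this is not automatic for an algebraically irreducible Banach space representation. Here I would appeal to the stated combination of \cite[Propositions~2.6, 4.2, and 4.17]{de_jeu_tomiyama:2016}, whose strategy is to analyze $\pi$ through its restriction to $\coeffalg$ and to compare with the finite-dimensional classification in \cref{res:involutive_periodic_point}. If $S$ were finite, $\sigma$-invariance would force it to be a union of finitely many periodic orbits; a spectral decomposition of $\pi(\coeffalg)$ via Urysohn idempotents attached to the points of $S$ would then show that $\pi$ splits into pieces of the form described in \cref{res:involutive_periodic_point} and must therefore be finite-dimensional by algebraic irreducibility, contradicting the hypothesis. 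For infinite $S$, the same circle of arguments yields the kernel description in \cref{eq:kernel_of_infinite_dimensional_algebraically_irreducible_representation}.

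Self-adjointness then follows from the explicit form: the $n$-th Fourier coefficient of $a^*$ equals $\alpha^n(\overline{f_{-n}})$, which vanishes on $S$ because $f_{-n}$ does and $S$ is $\sigma$-invariant. Finally, for the metrizable case, \cite[Proposition~4.17]{de_jeu_tomiyama:2016} strengthens the conclusion by exploiting the separability of $\coeffalg$ and the existence of a cyclic vector available through irreducibility: one may replace the a priori large invariant set $S$ by the single orbit $\{\sigma^n x : n\in\ZZ\}$ of an aperiodic point $x\in\aperpoints$.
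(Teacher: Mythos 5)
Your proposal takes essentially the same route as the paper, which gives no self-contained argument but simply combines the normability of the representation space (hence closedness of $\Ker(\pi)$) with \cite[Propositions~2.6, 4.2, and 4.17]{de_jeu_tomiyama:2016} — precisely the reduction you make for the hard direction and the metrizable case. The elementary steps you supply in addition (extracting $S$ from $\Ker(\pi)\cap\cont(X)$ via Gelfand duality, the easy inclusion, and selfadjointness from invariance of $S$) are correct and consistent with that assembly.
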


In order to relate $\Ker(\pi)$ for an infinite dimensional algebraically irreducible representation $\pi$ to an involutive representation, we shall use \cref{eq:kernel_of_infinite_dimensional_algebraically_irreducible_representation}, the representations $\pi_{x,\lambda}$ as described before for periodic points $x$ and $\lambda\in\TT$, and a family of infinite dimensional representations of $\ellone$ that are associated with aperiodic points and that we shall now introduce. Here, again, more can be said than we shall include in the present paper, and we refer to \cite[Section~3.3]{de_jeu_tomiyama:2016} for further information.

Fix $x\in\aperpoints$, and let $1\leq p<\infty$. We let $\bounded(\ell^p(\ZZ))$ denote the bounded linear operators on $\ell^p(\ZZ)$, and, for $k \in \ZZ$, we let $e_k$ denote the element of $\ell^p(\ZZ)$ with 1 in the $k$th coordinate and zero elsewhere. Let $S\in\bounded(\ell^p(\ZZ))$ be the right shift, determined by $Se_k=e_{k+1}$ for $k\in\ZZ$. For $f\in\coeffalg$, let $\pi^p_x (f)\in\bounded(\ell^p(\ZZ))$ be determined by $ \pi^p_x (f)e_k= f(\sigma^k x  )e_k$ for $k\in\ZZ$. It is then easily seen (see \cite[Lemma~3.1]{de_jeu_tomiyama:2016} for details) that there exists a unique unital continuous  representation $\pi^p_x :\ellone\to\bounded(\ell^p(\ZZ))$ such that
\begin{equation}\label{eq:aperiodic_point_representation}
\pi^p_x \left(\sum_{n\in\ZZ} f_n\delta^n\right)=\sum_{n\in\ZZ}\pi^p_x (f)S^n\quad
\end{equation}
for all $\sum_{n\in\ZZ} f_n\delta^n\in\ellone$. Furthermore, if $p=2$, then $\pi_x^2$ is an involutive representation of $\ellone$ on the Hilbert space $l^2(\ZZ)$.

We collect a few relevant facts about these infinite dimensional representations, the first three of which are taken from \cite[Theorem~3.16 and Lemma~4.9]{de_jeu_tomiyama:2016}. Part~\partref{aperiodic_point_topologically_irreducible} is not too hard to establish, and part~\partref{aperiodic_point_kernel_independent} is rather obvious, but part~\partref{aperiodic_point_algebraically_irreducible} is considerably more intricate. Part~\partref{aperiodic_point_kernel_of_involutive_representation} is immediate from the parts~\partref{aperiodic_point_algebraically_irreducible},~\partref{aperiodic_point_topologically_irreducible}, and~\partref{aperiodic_point_kernel_independent}.

\begin{proposition}\label{res:infinite_dimensional} Let $x\in\aperpoints$ and let $1\leq p<\infty$. Then:
\begin{enumerate}
\item\label{aperiodic_point_algebraically_irreducible} The representation $\pi_x^1$ of $\ellone$ on $\ell^1(\ZZ)$ is algebraically irreducible;
\item\label{aperiodic_point_topologically_irreducible} For $1<p<\infty$, the representation $\pi_x^p$ of $\ellone$ on $\ell^p(\ZZ)$ is topologically  irreducible, but not algebraically irreducible;
\item\label{aperiodic_point_kernel_independent} The kernel of $\pi_x^p$ is a selfadjoint ideal of $\ellone$ that does not depend on~$p$. In fact,
\begin{equation}\label{eq:aperiodic_point_kernel_description}
\Ker(\pi_x^p)=\lrset{\sum_{n\in\ZZ}f_n\delta^n\in\ellone : f_n(\sigma^k x)=0 \textup{ for all } n,k\in\ZZ};
\end{equation}
\item\label{aperiodic_point_kernel_of_involutive_representation} The primitive ideal $\Ker(\pi_x^1)$ of $\ellone$ is also the kernel of the topologically irreducible involutive representation $\pi_{x}^2$ of $\ellone$ on the Hilbert space $\ell^2(\ZZ)$.
\end{enumerate}
\end{proposition}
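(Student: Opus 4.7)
My strategy is to handle the four parts in the order \partref{aperiodic_point_kernel_independent}, \partref{aperiodic_point_topologically_irreducible}, \partref{aperiodic_point_algebraically_irreducible}, \partref{aperiodic_point_kernel_of_involutive_representation}, working from easiest to hardest.

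For part~\partref{aperiodic_point_kernel_independent} I would compute directly on basis vectors. Combining $\pi_x^p(f)e_k=f(\sigma^k x)e_k$ with $\pi_x^p(\delta^n)=S^n$ gives
\[
\pi_x^p\biggl(\sum_{n\in\ZZ} f_n\delta^n\biggr)e_k=\sum_{n\in\ZZ} f_n(\sigma^{k+n}x)\,e_{k+n}
\]
for every $k\in\ZZ$. Equating coordinates, $a=\sum_n f_n\delta^n$ lies in $\Ker(\pi_x^p)$ precisely when $f_n(\sigma^m x)=0$ for all $m,n\in\ZZ$; this condition involves only the orbit of the aperiodic point $x$ and is insensitive to $p$, which yields \cref{eq:aperiodic_point_kernel_description}. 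A short calculation with $a^*(n)=\overline{\alpha^n(a(-n))}$ shows that the condition is preserved under the involution, hence selfadjointness.

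For part~\partref{aperiodic_point_topologically_irreducible} I would establish topological irreducibility first. Let $V$ be a non-zero closed $\pi_x^p$-invariant subspace, pick $0\ne v=\sum c_k e_k\in V$, and select $k_0$ with $c_{k_0}\ne0$. Because $x$ is aperiodic, the points $\sigma^k x$ are distinct, so Urysohn's lemma produces $h_N\in\cont(X)$ of norm at most~$1$ with $h_N(\sigma^{k_0}x)=1$ and $h_N(\sigma^k x)=0$ for $0<|k-k_0|\le N$. Then $\pi_x^p(h_N)v-c_{k_0}e_{k_0}$ has $\ell^p$-norm at most $\bigl(\sum_{|k-k_0|>N}\abs{c_k}^p\bigr)^{1/p}\to0$, so $e_{k_0}\in V$; applying $S^n=\pi_x^p(\delta^n)$ then puts every basis vector in $V$, forcing $V=\ell^p(\ZZ)$. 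For non-algebraic-irreducibility when $p>1$, the orbit of $e_0$ satisfies $\pi_x^p(\sum f_n\delta^n)e_0=\sum_n f_n(\sigma^n x)e_n$, whose $\ell^1$-norm is at most $\sum_n\norm{f_n}_\infty<\infty$, so this orbit lies inside $\ell^1(\ZZ)\subsetneq\ell^p(\ZZ)$.

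The main obstacle is part~\partref{aperiodic_point_algebraically_irreducible}: algebraic irreducibility of $\pi_x^1$. Given any non-zero $v\in\ell^1(\ZZ)$ and prescribed $w\in\ell^1(\ZZ)$, I have to produce an \emph{exact} $a\in\ellone$ with $\pi_x^1(a)v=w$. The Urysohn argument from~\partref{aperiodic_point_topologically_irreducible} only yields approximation, because the orbit $\set{\sigma^k x:k\in\ZZ}$ may accumulate in $X$, so no single $f\in\cont(X)$ can isolate one orbit point while vanishing on all the others. My plan is therefore to spread the cancellation over infinitely many Fourier coefficients of $a=\sum_m g_m\delta^m$: first, exactly solve $\pi_x^1(a_0)v=e_{k_0}$ for some index $k_0$ at which $v$ has a non-zero coefficient, by building $a_0$ as a rapidly $\ell^1$-convergent series $\sum_m h_m\delta^m$ in which each successive correction $h_m$ annihilates the tail produced by the previous partial sum on an increasingly large Urysohn-separated finite portion of the orbit; then every $e_j$ lies in the orbit of $v$ via $\pi_x^1(\delta^{j-k_0})$, after which the identity $\pi_x^1(\sum f_n\delta^n)e_0=\sum_n f_n(\sigma^n x)e_n$ together with a similar $\ell^1$-interpolation realises the prescribed $w$. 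The delicate point, and the reason for the jump in difficulty from~\partref{aperiodic_point_topologically_irreducible}, is keeping the norms of the corrections under control so that the series defining $a_0$ converges in $\ellone$ while still solving the interpolation \emph{exactly}. Finally, part~\partref{aperiodic_point_kernel_of_involutive_representation} is an immediate assembly: by~\partref{aperiodic_point_algebraically_irreducible}, $\Ker(\pi_x^1)$ is primitive; by~\partref{aperiodic_point_kernel_independent} applied with $p=2$, it coincides with $\Ker(\pi_x^2)$; and by~\partref{aperiodic_point_topologically_irreducible} with $p=2$, the involutive Hilbert-space representation $\pi_x^2$ on $\ell^2(\ZZ)$ is topologically irreducible.
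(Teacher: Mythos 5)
The paper does not actually prove parts \partref{aperiodic_point_algebraically_irreducible}--\partref{aperiodic_point_kernel_independent}: it imports them from \cite[Theorem~3.16 and Lemma~4.9]{de_jeu_tomiyama:2016} and only records that part \partref{aperiodic_point_kernel_of_involutive_representation} is immediate from the other three. Your derivation of \partref{aperiodic_point_kernel_of_involutive_representation} matches that, and your direct arguments for \partref{aperiodic_point_kernel_independent} (coordinatewise computation of $\pi_x^p(a)e_k$ together with the check that the defining condition is stable under $a\mapsto a^*$) and for \partref{aperiodic_point_topologically_irreducible} (Urysohn functions isolating a single orbit point to extract $e_{k_0}$ from any non-zero vector of a closed invariant subspace, then shifting; and the observation that $\pi_x^p(\ellone)e_0\subset\ell^1(\ZZ)\subsetneq\ell^p(\ZZ)$ for $p>1$) are correct and essentially complete.

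Part \partref{aperiodic_point_algebraically_irreducible} is where your proposal stops being a proof. You rightly see that approximation alone does not suffice and propose a successive-correction scheme, but you never establish the one estimate that makes such a scheme converge: a \emph{uniform} bound of the form ``for every $u\in\ell^1(\ZZ)$ there exists $a\in\ellone$ with $\norm{a}\le C\norm{u}_1$ and $\norm{\pi_x^1(a)v-u}_1\le\tfrac12\norm{u}_1$,'' with $C$ depending only on $v$. Without this, the series of corrections need not converge in $\ellone$. Such a bound can in fact be extracted from your own Urysohn construction (the first step produces $e_{k_0}$ up to error $\epsilon$ using an element of norm at most $1/\abs{c_{k_0}}$, shifts give every $e_j$ with the same norm control, and superposition handles general $u$ with $C=1/\abs{c_{k_0}}$), but this is precisely the ``delicate point'' you name and then leave unproved; note also that the corrections cannot be single terms $h_m\delta^m$ as you write, since annihilating a general error vector requires a full element of the form $\sum_j u_j\delta^{j-k_0}a_0$. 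The paper sidesteps all of this by citation and explicitly flags part \partref{aperiodic_point_algebraically_irreducible} as ``considerably more intricate''; as written, your argument for it is a plan rather than a proof.
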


\begin{remark}
Before proceeding, let us note that, if~$X$ is metrizable, the combination of \cref{res:involutive_periodic_point,res:kernel_of_infinite_dimensional_algebraically_irrreducible_representation,res:infinite_dimensional} shows that we can describe the set of primitive ideals of $\ellone$, even though we do not   know all infinite dimensional algebraically irreducible representations. This set is $\set{\Ker(\pi_{x,\lambda}) : x\in\perpoints,\,\lambda\in\TT}\cup\set{\Ker(\pi_x^1) : x\in\aperpoints}$. As a word of warning, let us note that there are multiple occurrences in this enumeration. If $x_1,x_2\in\perpoints$ and $\lambda_1,\lambda_2\in\TT$, then $\Ker(\pi_{x_1,\lambda_1})=\Ker(\pi_{x_2,\lambda_2})$ if and only if the orbits of $x_1$ and $x_2$ coincide and $\lambda_1=\lambda_2$. If $x_1,x_2\in\aperpoints$, then $\Ker(\pi_{x_1}^1)=\Ker(\pi_{x_2}^1)$ if and only if the closures of the orbits of $x_1$ and $x_2$ coincide. Furthermore, $\set{\Ker(\pi_{x,\lambda}) : x\in\perpoints,\,\lambda\in\TT}\cap\set{\Ker(\pi_x^1) : x\in\aperpoints}=\emptyset$.  We refer to \cite[Lemma~4.14]{de_jeu_tomiyama:2016} for more details.
\end{remark}

We collect the material on arbitrary primitive ideals in the next result. The crucial link with involutive representations is established in the parts~\partref{kernel_of_involutive_representation} and~\partref{kernel_of_topologically_irreducible_involutive_representation}.

\begin{theorem}\label{res:primitive_and_involutive}
Let~$I$ be a primitive ideal of $\ellone$.
\begin{enumerate}
\item\label{intersection}
\begin{enumerate}
\item\label{finite_dimensional}If~$I$ is the kernel of a finite dimensional algebraically irreducible representation of $\ellone$, then $I=\Ker(\pi_{x,\lambda})$ for some $x\in\perpoints$ and $\lambda\in\TT$.

Furthermore, $\pi_{x,\lambda}$ is a topologically irreducible involutive representation of $\ellone$ on a finite dimensional Hilbert space.
\item\label{infinite_dimensional} If~$I$ is the kernel of an infinite dimensional algebraically irreducible representation of $\ellone$, then there exists an infinite invariant subset~$S$ of~$X$ such that
\[
I=\bigcap_{\genfrac{}{}{0pt}{1}{x\in\perpoints\cap S}{\lambda\in\TT}}\!\!\!\!\Ker(\pi_{x,\lambda})\;\;\cap\!\!\!\bigcap_{\genfrac{}{}{0pt}{1}{x\in\aperpoints\cap S}{}}\!\!\!\!\Ker(\pi_x^{2}).
\]
If $X$ is metrizable, then there exists $x\in\aperpoints$ such that $I=\Ker(\pi_{x}^2)$.

Furthermore, all $\pi_{x,\lambda}$ that occur are topologically irreducible involutive representations of $\ellone$ on finite dimensional Hilbert spaces, and all $\pi_{x}^2$ that occur are topologically irreducible involutive representations of $\ellone$ on the Hilbert space $\ell^2(\ZZ)$.
\end{enumerate}
\item\label{kernel_of_involutive_representation} There exists a unital involutive representation~$\pi$ of $\ellone$ on a Hilbert space such that $I=\Ker(\pi)$.
\item\label{kernel_of_topologically_irreducible_involutive_representation} If~$X$ is metrizable, then there exists a topologically irreducible involutive representation~$\pi$ of $\ellone$ on a Hilbert space such that $I=\Ker(\pi)$.
\end{enumerate}
\end{theorem}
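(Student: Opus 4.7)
The strategy is to translate the purely algebraic descriptions of primitive ideals supplied by \cref{res:involutive_periodic_point,res:kernel_of_infinite_dimensional_algebraically_irrreducible_representation,res:infinite_dimensional} into kernel statements for involutive Hilbert-space representations, by matching vanishing-on-invariant-sets conditions orbit by orbit. Part~\partref{finite_dimensional} is then immediate from \cref{res:involutive_periodic_point}: it produces $x\in\perpoints$ and $\lambda\in\TT$ with $I=\Ker(\pi_{x,\lambda})$, and $\pi_{x,\lambda}$ is by construction an involutive representation on a finite dimensional Hilbert space, algebraically equivalent to the given algebraically irreducible representation, hence algebraically and a fortiori topologically irreducible.

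Part~\partref{infinite_dimensional} carries the main content. Applying \cref{res:kernel_of_infinite_dimensional_algebraically_irrreducible_representation} furnishes an infinite invariant subset $S\subset X$ such that $I$ is precisely the set of $a=\sum_{n\in\ZZ}f_n\delta^n\in\ellone$ with $f_n\upharpoonright_{S}=0$ for every $n$. Because $S$ is $\sigma$-invariant, it is the union of the $\sigma$-orbits of its own points, so the condition ``$f_n$ vanishes on $S$ for all $n$'' decouples into a simultaneous orbit-by-orbit vanishing condition. For $x\in\perpoints\cap S$, the orbit-vanishing on the orbit of $x$ is exactly the right-hand side of \cref{eq:intersection_of_kernels_of_finite_dimensional_representation_for_periodic_point}, i.e.\ $\bigcap_{\lambda\in\TT}\Ker(\pi_{x,\lambda})$; for $x\in\aperpoints\cap S$ it is exactly the right-hand side of \cref{eq:aperiodic_point_kernel_description} at $p=2$, i.e.\ $\Ker(\pi_x^2)$, by part~\partref{aperiodic_point_kernel_independent} of \cref{res:infinite_dimensional}. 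Intersecting over all $x\in S$ recovers $I$ in the asserted form. The metrizability clause drops out of the second half of \cref{res:kernel_of_infinite_dimensional_algebraically_irrreducible_representation}, which allows $S$ to be chosen as a single aperiodic orbit, so a single $\Ker(\pi_x^2)$ suffices. Topological irreducibility of each $\pi_{x,\lambda}$ that occurs is automatic, while that of each $\pi_x^2$ is part~\partref{aperiodic_point_topologically_irreducible} of \cref{res:infinite_dimensional} specialized to $p=2$.

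Parts~\partref{kernel_of_involutive_representation} and~\partref{kernel_of_topologically_irreducible_involutive_representation} then follow by assembly. For~\partref{kernel_of_involutive_representation}, I would form the Hilbert-space direct sum of the involutive representations produced in~\partref{finite_dimensional} or~\partref{infinite_dimensional} over their set-indexed parameters (subsets of $X\times\TT$, respectively $X$); each summand is a unital contractive involutive representation, because involutive representations of $\ellone$ are automatically contractive, so the sum is a unital involutive Hilbert-space representation whose kernel, being the intersection of the summand kernels, equals $I$. For~\partref{kernel_of_topologically_irreducible_involutive_representation}, the metrizability clauses in~\partref{finite_dimensional} and~\partref{infinite_dimensional} already exhibit $I$ as the kernel of a single topologically irreducible involutive Hilbert-space representation, so nothing further is required. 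The only place where real verification is called for is the orbit-decomposition step inside~\partref{infinite_dimensional}: once one is convinced that the set-vanishing description of $I$ through $S$ genuinely coincides with the stated intersection of orbit-kernels, the rest of the theorem is essentially a re-labelling of earlier results.
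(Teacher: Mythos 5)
Your proposal is correct and follows essentially the same route as the paper: part (1)(a) from \cref{res:involutive_periodic_point}, part (1)(b) by translating the vanishing-on-$S$ description of \cref{res:kernel_of_infinite_dimensional_algebraically_irrreducible_representation} into the orbit-wise kernel conditions \eqref{eq:intersection_of_kernels_of_finite_dimensional_representation_for_periodic_point} and \eqref{eq:aperiodic_point_kernel_description} at $p=2$, and parts (2) and (3) by taking a Hilbert direct sum of the (contractive) constituent representations, respectively by reading off the metrizable clauses. If anything, you spell out the orbit-by-orbit decomposition step more explicitly than the paper does, which is a harmless elaboration rather than a deviation.
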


\begin{proof}
Part~\partref{finite_dimensional} is contained in \cref{res:involutive_periodic_point}.

The first statement in part~\partref{infinite_dimensional} follows by combining equations~\eqref{eq:kernel_of_infinite_dimensional_algebraically_irreducible_representation}, \eqref{eq:aperiodic_point_representation}, and \eqref{eq:aperiodic_point_kernel_description} for $p=2$. The second statement in part~\partref{infinite_dimensional} follows from the metrizable case in \cref{res:kernel_of_infinite_dimensional_algebraically_irrreducible_representation} and \cref{eq:aperiodic_point_kernel_description} for $p=2$.

Since part~\partref{intersection} shows that, in both cases, $I$ is the simultaneous kernel of a suitable collection of involutive representations of $\ellone$, it is also the kernel of the Hilbert direct sum of the representations in this collection; note that this sum can be defined, since the pertinent representations are all contractive. This establishes part~\partref{kernel_of_involutive_representation}.

Part~\partref{kernel_of_topologically_irreducible_involutive_representation} is immediate from part~\partref{intersection}.

\end{proof}

With part~\partref{kernel_of_involutive_representation} of \cref{res:primitive_and_involutive} available, we can now use the observation that was already mentioned in the introduction: kernels of involutive representations of $\ellone$ are well-behaved when their relation with $\cstar$ is concerned.

\begin{theorem}\label{res:proper_closure}
Let~$I$ be a not necessarily closed proper ideal of $\ellone$. Then the closure $\cstarclosure{I}$ of~$I$ in $\cstar$ is a proper closed ideal of $\cstar$.
\end{theorem}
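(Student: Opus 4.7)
The plan is to reduce to the case where $I$ is a primitive ideal of $\ellone$, and then exploit \detailedcref{res:primitive_and_involutive}{kernel_of_involutive_representation} to realise $I$ as the kernel of a unital involutive Hilbert-space representation, which extends to $\cstar$.

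First I would observe that $\cstarclosure{I}$ is automatically a closed two-sided ideal of $\cstar$: by the density of $\ellone$ in $\cstar$ and the joint continuity of multiplication, a routine approximation argument shows that $ax,\,xa\in\cstarclosure{I}$ whenever $a\in\cstar$ and $x\in\cstarclosure{I}$. The nontrivial content of the theorem is therefore that $\cstarclosure{I}\neq\cstar$.

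For this I would invoke the Jacobson-style argument recalled just before \cref{res:involutive_periodic_point}: since $\ellone$ is unital, the proper ideal $I$ is contained in some maximal, and hence primitive, ideal $J$ of $\ellone$. Because $\cstarclosure{I}\subset\cstarclosure{J}$, it suffices to show that $\cstarclosure{J}\neq\cstar$. At this point \detailedcref{res:primitive_and_involutive}{kernel_of_involutive_representation} furnishes a unital involutive representation $\pi:\ellone\to\bounded(H)$ on some Hilbert space $H$ with $J=\Ker(\pi)$. By \cite[Proposition~2.7.4]{dixmier_C-STAR-ALGEBRAS_ENGLISH_NORTH_HOLLAND_EDITION:1977}, quoted in Section~\ref{sec:two_algebras}, $\pi$ extends to a (contractive) involutive representation $\widetilde\pi:\cstar\to\bounded(H)$. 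The kernel $\Ker(\widetilde\pi)$ is then a closed ideal of $\cstar$ containing $J$, hence containing $\cstarclosure{J}$; since $\widetilde\pi$ is unital we have $\idop\notin\Ker(\widetilde\pi)$. Chaining these inclusions gives $\cstarclosure{I}\subset\cstarclosure{J}\subset\Ker(\widetilde\pi)\subsetneq\cstar$, as desired.

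The conceptual obstacle has, in effect, already been absorbed into \cref{res:primitive_and_involutive}: without part~\partref{kernel_of_involutive_representation} of that theorem, there would be no direct mechanism to transport a primitive ideal of $\ellone$ to a proper closed ideal of $\cstar$, since an algebraically irreducible representation of $\ellone$ need not be a $*$-representation and thus need not extend to $\cstar$ at all. Once one has this $*$-representation with kernel $J$, the proof collapses to the standard ${\upC}^\ast$-extension theorem plus the density argument used in the first step.
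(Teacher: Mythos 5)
Your proposal is correct and follows essentially the same route as the paper's own proof: reduce to a primitive ideal via the Jacobson-style containment argument, realise it as the kernel of an involutive Hilbert-space representation by part~\partref{kernel_of_involutive_representation} of \cref{res:primitive_and_involutive}, extend that representation to $\cstar$, and observe that $\cstarclosure{I}$ lands inside the proper closed kernel of the extension. The only cosmetic difference is that you justify properness of the extended kernel via unitality, where the paper simply notes the extension is a non-zero representation.
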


\begin{proof}
It is elementary that $\cstarclosure{I}$\,\, is an ideal of $\cstar$; it remains to be shown that it is a proper subset of $\cstar$. As explained earlier, we may assume that~$I$ is a primitive ideal of $\ellone$. In that case, \cref{res:primitive_and_involutive} shows that $I=\Ker(\pi)$ for some non-zero involutive representation of $\ellone$ on a Hilbert space. If we let $\pi^{\mathrm e}$ denote the extension of~$\pi$ to a non-zero involutive representation of $\cstar$ on that Hilbert space, then $\Ker(\pi^{\mathrm e})$ is a proper closed subset of $\cstar$. Since $\cstarclosure{I}=\cstarclosure{\Ker(\pi)}\subset\cstarclosure{\Ker(\pi^{\mathrm e})}=\Ker(\pi^{\mathrm e})$, we see that $\cstarclosure{I}$ is also a proper subset of $\cstar$.
\end{proof}

\begin{remark}\quad
\begin{enumerate}
\item
An inspection of the structure of the proof of \cref{res:proper_closure} shows that \cite[Proposition~2.6]{de_jeu_tomiyama:2016} is used, which, in turn, is based on the so-called intersection property of the commutant of $\cont(X)$ in $\ellone$; see \cite[Theorem~3.7]{de_jeu_svensson_tomiyama:2012}. The latter result is the rather non-trivial key result in \cite{de_jeu_svensson_tomiyama:2012}. Thus, in spite of the simplicity of its formulation, \cref{res:proper_closure} seems to be a reasonably deep fact about the relation between $\ellone$ and $\cstar$.
\item
For a not necessarily closed proper ideal~$I$ of $\ellone$, a necessary and sufficient condition for $\cstarclosure{I}$\,\,to be a proper ideal of $\cstar$ is given in \cite[Proposition~4.12]{de_jeu_tomiyama:2013b}. In effect, we have shown that this condition is always satisfied, with a proof that is in the spirit of the proof of \cite[Proposition~4.12]{de_jeu_tomiyama:2013b}.
\end{enumerate}
\end{remark}

As an illustration how \cref{res:proper_closure} can be used to deduce results about $\ellone$ from results about $\cstar$ without resorting to the properties of the dynamical system, we re-establish the following result. It can already be found as \cite[Theorem~4.2]{de_jeu_svensson_tomiyama:2012}.

\begin{corollary}\label{res:simplicity}
The following are equivalent:
\begin{enumerate}
\item The only closed ideals of $\ellone$ are $\set{0}$ and $\ellone$;\label{simple}
\item The only closed selfadjoint ideals of $\ellone$ are $\set{0}$ and $\ellone$;\label{star_simple}
\item~$X$ has an infinite number of points, and the only closed invariant subsets of~$X$ are $\emptyset $ and~$X$.\label{minimal}
\end{enumerate}
\end{corollary}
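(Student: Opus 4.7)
The plan is to establish the cycle $(1)\Rightarrow(2)\Rightarrow(3)\Rightarrow(1)$, with $(3)\Rightarrow(1)$ as the key step where \cref{res:proper_closure} transfers the well-known simplicity result for $\cstar$ (due to Zeller-Meier and Power, among others) over to $\ellone$. The implication $(1)\Rightarrow(2)$ is immediate, since every closed selfadjoint ideal is a closed ideal.

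For $(3)\Rightarrow(1)$, I will invoke the classical fact that under hypothesis~(3) the crossed product ${\upC}^\ast$-algebra $\cstar={\mathrm C}(X)\rtimes_\sigma\ZZ$ is simple. Let~$I$ be a proper closed ideal of $\ellone$. Then \cref{res:proper_closure} gives that $\cstarclosure{I}$ is a proper closed ideal of $\cstar$, so $\cstarclosure{I}=\set{0}$ by simplicity. Since the inclusion $\ellone\hookrightarrow\cstar$ is continuous, $I\subseteq\cstarclosure{I}=\set{0}$, forcing $I=\set{0}$.

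For $(2)\Rightarrow(3)$, I will argue the contrapositive: assuming~(3) fails, I exhibit a non-zero proper closed selfadjoint ideal of $\ellone$. Two sub-cases cover the failure of~(3). If~$X$ is finite, every $x\in X$ is periodic, and by \cref{res:involutive_periodic_point} the kernel $\Ker(\pi_{x,1})$ is a closed selfadjoint ideal of $\ellone$; it is proper because $\pi_{x,1}\neq 0$, and non-zero because $\pi_{x,1}$ takes values in a finite-dimensional matrix algebra whereas $\ellone$ is infinite-dimensional. If~$\sigma$ is not minimal, pick a non-empty proper closed invariant subset $Y\subsetneq X$ and set
\[
I_Y := \biggset{\sum_{n\in\ZZ} f_n\delta^n\in\ellone : f_n\upharpoonright_Y = 0\text{ for all } n\in\ZZ}.
\]
A direct verification, using the $\sigma$-invariance of~$Y$ for both the multiplication and the involution, shows that $I_Y$ is a closed selfadjoint two-sided ideal of $\ellone$. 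It is proper because $\delta^0\notin I_Y$ (as $Y\neq\emptyset$) and non-zero by Urysohn's lemma (as $Y\subsetneq X$ in a compact Hausdorff space).

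The main subtlety is that \cref{res:proper_closure} transfers ideals in only one direction: proper closed ideals of $\ellone$ extend to proper closed ideals of $\cstar$. For the reverse direction needed in $(2)\Rightarrow(3)$, the route via $J\mapsto J\cap\ellone$ for a non-trivial closed ideal $J$ of $\cstar$ would require the deeper fact that $J$ intersects $\ellone$ non-trivially. I sidestep this entirely by constructing witnesses by hand inside $\ellone$, so the only serious inputs are \cref{res:proper_closure} together with the classical ${\upC}^\ast$-algebraic simplicity theorem.
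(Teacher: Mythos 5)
Your proof is correct and follows essentially the same route as the paper: the cycle $(1)\Rightarrow(2)\Rightarrow(3)\Rightarrow(1)$, with $(3)\Rightarrow(1)$ obtained by combining the classical simplicity theorem for $\cstar$ with \cref{res:proper_closure}. The only difference is that for $(2)\Rightarrow(3)$ you write out the explicit ideals ($\Ker(\pi_{x,1})$ for finite $X$, and $I_Y$ for a non-trivial closed invariant $Y$) where the paper simply cites the construction from an earlier reference; your verification of these is sound.
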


\begin{proof}
It is clear that~\partref{simple} implies~\partref{star_simple}. If~\partref{minimal} does not hold, then it is not too difficult to construct a non-trivial closed selfadjoint ideal of $\ellone$; we refer to \cite[proof of Theorem~4.2]{de_jeu_svensson_tomiyama:2012} for details. Thus~\partref{star_simple} implies~\partref{minimal}. The hard part is to show that~\partref{minimal} implies~\partref{simple}, and it is here that \cref{res:proper_closure} can be put to good use to be able to apply a result about $\cstar$. Indeed, \cite[Theorem~5.4]{tomiyama_THE_INTERPLAY_BETWEEN_TOPOLOGICAL_DYNAMICS_AND_THEORY_OF_C-STAR-ALGEBRAS:1992} shows that, if~\partref{minimal} holds, then the algebra $\cstar$ has only trivial closed ideals. \cref{res:proper_closure} shows that the same is then true for $\ellone$, which is~\partref{simple}.
\end{proof}

We shall now consider the relation between the closure operation in $\ellone$ and in $\cstar$. We start with the following observation, the first part of which was already alluded to in Section~\ref{sec:introduction}.

\begin{proposition}\label{res:retrieve_from_closure}\quad
\begin{enumerate}
\item\label{kernel_is_ok} If $I$ is the kernel of an involutive representation of $\ellone$ on a Hilbert space, then $I=\cstarclosure{I}\cap\ellone$.
\item\label{inheritance} If $\set{I_\alpha : \alpha
\in A}$ is collection of closed ideals of $\ellone$ such that $I_\alpha=\cstarclosure{I_\alpha}\cap\ellone$ for all $\alpha\in A$, then
\[
\bigcap_{\alpha\in A}I_\alpha=\cstarclosure{\bigcap_{\alpha\in A}I_\alpha}\cap\ellone.
\]
\end{enumerate}
\end{proposition}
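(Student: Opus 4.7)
For part~\partref{kernel_is_ok}, the plan is to lift the representation defining $I$ to the enveloping \Calgebra. So suppose $I=\Ker(\pi)$ for an involutive representation $\pi:\ellone\to\bounded(H)$. By the universal property of the enveloping \Calgebra\ (cited from \cite[Proposition~2.7.4]{dixmier_C-STAR-ALGEBRAS_ENGLISH_NORTH_HOLLAND_EDITION:1977} in \cref{sec:two_algebras}),~$\pi$ extends uniquely to an involutive representation $\pi^{\mathrm e}:\cstar\to\bounded(H)$. Since $\pi^{\mathrm e}$ is a $\ast$-homomorphism between \Calgebras, its kernel $\Ker(\pi^{\mathrm e})$ is closed in $\cstar$. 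As $I\subset\Ker(\pi^{\mathrm e})$, taking closures in $\cstar$ gives $\cstarclosure{I}\subset\Ker(\pi^{\mathrm e})$. Intersecting with $\ellone$ and using that $\pi^{\mathrm e}$ agrees with~$\pi$ on $\ellone$, one gets $\cstarclosure{I}\cap\ellone\subset\Ker(\pi^{\mathrm e})\cap\ellone=\Ker(\pi)=I$. The reverse inclusion $I\subset\cstarclosure{I}\cap\ellone$ is immediate, which finishes this part.

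For part~\partref{inheritance}, the plan is purely formal, using monotonicity of the $\cstar$-closure. Write $J:=\bigcap_{\alpha\in A}I_\alpha$. The inclusion $J\subset\cstarclosure{J}\cap\ellone$ is clear. For the other direction, fix $a\in\cstarclosure{J}\cap\ellone$. For every $\alpha\in A$ one has $J\subset I_\alpha$, so by monotonicity $\cstarclosure{J}\subset\cstarclosure{I_\alpha}$, and therefore $a\in\cstarclosure{I_\alpha}\cap\ellone=I_\alpha$ by the hypothesis on $I_\alpha$. Since this holds for every $\alpha\in A$, one concludes $a\in J$.

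There is no serious obstacle in either part. The only substantive input is the extension property of involutive representations from $\ellone$ to $\cstar$ used in part~\partref{kernel_is_ok}; this is precisely what makes the kernel of~$\pi$ in $\ellone$ behave well under $\cstar$-closure, by producing an ambient closed ideal of $\cstar$ into which $\cstarclosure{I}$ must fit. Part~\partref{inheritance} is then a stability statement of the same nature that intersections of closed subsets behave compatibly with monotone closure operations, and needs no additional structural input.
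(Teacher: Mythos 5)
Your proposal is correct and follows essentially the same route as the paper: part (1) via the unique extension $\pi^{\mathrm e}$ of $\pi$ to $\cstar$ and the inclusion $\cstarclosure{I}\subset\Ker(\pi^{\mathrm e})$, and part (2) via monotonicity of the closure, which is the paper's set-theoretic computation written out elementwise. No gaps.
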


\begin{proof}
Suppose that $I=\Ker(\pi)$ for an involutive representation $\pi$ of $\ellone$. As in the proof of \cref{res:proper_closure}, we let $\pi^{\mathrm e}$ denote the extension of $\pi$ to an involutive representation of $\cstar$. Then, again as in that proof, we have
$\cstarclosure{I}=\cstarclosure{\Ker(\pi)}\subset\cstarclosure{\Ker(\pi^{\mathrm e})}=\Ker(\pi^{\mathrm e})$. Hence $\cstarclosure{I}\cap\ellone\subset \Ker(\pi^{\mathrm e})\cap\ellone=\Ker(\pi)=I$. Since obviously $I\subset\cstarclosure{I}\cap\ellone$, part~\partref{kernel_is_ok} has been established.

We turn to part~\partref{inheritance}. Using the properties of the $I_\alpha$ in the final step, we see that
\[
\cstarclosure{\bigcap_{\alpha\in A}I_\alpha}\cap\ellone\subset\lrpars{\bigcap_{\alpha\in A}\cstarclosure{I_\alpha}} \cap\ellone=\bigcap_{\alpha\in A}\lrpars{\cstarclosure{I_\alpha} \cap\ellone}=\bigcap_{\alpha}I_\alpha.
\]
Since the reverse inclusion is obvious, the proof is complete.
\end{proof}

Combining \cref{res:retrieve_from_closure} and part~\partref{kernel_of_involutive_representation} of \cref{res:primitive_and_involutive}, we have the following.

\begin{corollary}\label{res:intersection_of_primitive_ok}
If $I$ is an intersection of primitive ideals of $\ellone$, then $I=\cstarclosure{I}\cap\ellone$.
\end{corollary}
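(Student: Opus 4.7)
The statement is an immediate combination of the two results that have just been established, so the plan is essentially to string them together. Write $I=\bigcap_{\alpha\in A} J_\alpha$, where each $J_\alpha$ is a primitive ideal of $\ellone$.

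First I would invoke part~\partref{kernel_of_involutive_representation} of \cref{res:primitive_and_involutive}: for each $\alpha\in A$ there exists a (unital) involutive representation $\pi_\alpha$ of $\ellone$ on a Hilbert space with $J_\alpha=\Ker(\pi_\alpha)$. Next, apply part~\partref{kernel_is_ok} of \cref{res:retrieve_from_closure} to each $\pi_\alpha$ to obtain $J_\alpha=\cstarclosure{J_\alpha}\cap\ellone$. In particular every $J_\alpha$ is a closed ideal of $\ellone$ (as was already recorded in the discussion preceding \cref{res:kernel_of_infinite_dimensional_algebraically_irrreducible_representation}), which is the hypothesis needed to feed this family into part~\partref{inheritance} of \cref{res:retrieve_from_closure}. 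That proposition then yields
\[
I=\bigcap_{\alpha\in A}J_\alpha=\cstarclosure{\bigcap_{\alpha\in A}J_\alpha}\cap\ellone=\cstarclosure{I}\cap\ellone,
\]
which is the desired equality.

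There is really no obstacle here; the only thing worth noting is that all the work has been absorbed into \cref{res:primitive_and_involutive}, whose proof of part~\partref{kernel_of_involutive_representation} requires the representation-theoretic descriptions assembled in \cref{res:involutive_periodic_point,res:kernel_of_infinite_dimensional_algebraically_irrreducible_representation,res:infinite_dimensional}. Once one accepts that every primitive ideal of $\ellone$ is a Hilbert space representation kernel, the passage to arbitrary intersections is purely formal, thanks to the stability statement in part~\partref{inheritance} of \cref{res:retrieve_from_closure}.
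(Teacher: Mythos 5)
Your argument is correct and is exactly the paper's intended proof: the paper derives the corollary by "combining \cref{res:retrieve_from_closure} and part~\partref{kernel_of_involutive_representation} of \cref{res:primitive_and_involutive}," which is precisely the chain you spell out. No gaps; the observation that each primitive ideal is closed (so that part~\partref{inheritance} of \cref{res:retrieve_from_closure} applies) is the only hypothesis worth checking, and you have checked it.
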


We can now determine when all closed ideals of $\ellone$ can be retrieved from their closure in $\cstar$ as above.

\begin{theorem}\label{res:intersection_closure}
The following are equivalent:
\begin{enumerate}
\item\label{intersection_general} $\elloneclosure{I}=\cstarclosure{I}\cap\ellone$ for every not necessarily closed ideal of $\ellone$;
\item\label{intersection_closed} $I=\cstarclosure{I}\cap\ellone$ for every closed ideal of $\ellone$;
\item\label{all_intersection_of_primitive} Every closed ideal of $\ellone$ is an intersection of primitive ideals of $\ellone$;
\item\label{all_self_adjoint} Every closed ideal of $\ellone$ is a selfadjoint ideal of $\ellone$;
\item\label{all_kernels_of_involutive_representations} Every closed ideal of $\ellone$ is the kernel of an involutive representation of $\ellone$ on a Hilbert space;
\item\label{free} There are no periodic points of~$\sigma$ in~$X$.
\end{enumerate}
\end{theorem}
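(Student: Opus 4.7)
The plan is to break this theorem into a cluster of easy equivalences among~\partref{intersection_general}--\partref{all_kernels_of_involutive_representations} and then two substantive implications to bring in~\partref{free}. For the cluster, \partref{intersection_general}$\Leftrightarrow$\partref{intersection_closed} is immediate since $\cstarclosure{I}=\cstarclosure{\elloneclosure{I}}$; \partref{all_intersection_of_primitive}$\Rightarrow$\partref{all_kernels_of_involutive_representations} combines \cref{res:primitive_and_involutive}\partref{kernel_of_involutive_representation} with a Hilbert direct sum of the representations realising each primitive ideal; \partref{all_kernels_of_involutive_representations}$\Rightarrow$\partref{intersection_closed} is precisely \cref{res:retrieve_from_closure}\partref{kernel_is_ok}; \partref{all_kernels_of_involutive_representations}$\Rightarrow$\partref{all_self_adjoint} is immediate since kernels of involutive representations are selfadjoint; and \partref{intersection_closed}$\Rightarrow$\partref{all_self_adjoint} follows from the selfadjointness of closed ideals of a \Calgebra, which transfers through the intersection with $\ellone$.

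For \partref{all_self_adjoint}$\Rightarrow$\partref{free} I would argue by contrapositive: given a periodic point $x\in\perpoints$ of period $p$, the finite dimensional involutive representations $\pi_{x,\lambda}$ depend in an analytic fashion on $\lambda\in\TT$, and by restricting $\lambda$ to a proper subarc of $\TT$ one obtains a disk-algebra type structure in which one can cut out a closed non-selfadjoint ideal of $\ellone$. This construction is carried out in \cite[Theorem~4.4]{de_jeu_svensson_tomiyama:2012}, which I would cite directly.

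For the remaining implication \partref{free}$\Rightarrow$\partref{all_intersection_of_primitive}, I would invoke the structural fact, established in the earlier papers on $\ellone$, that in the free case every closed ideal $I$ of $\ellone$ has the form
\[
I_F:=\biggset{\sum_{n\in\ZZ}f_n\delta^n\in\ellone : f_n\upharpoonright_F=0\text{ for all }n\in\ZZ}
\]
for some closed invariant $F\subset X$. Granted this, \cref{res:infinite_dimensional}\partref{aperiodic_point_kernel_independent} applied at each $x\in F$ (which is aperiodic by~\partref{free}) gives
\[
\Ker(\pi_x^2)=\biggset{\sum_{n\in\ZZ}f_n\delta^n\in\ellone : f_n(\sigma^k x)=0\text{ for all }n,k\in\ZZ},
\]
and I would then verify the identity $I_F=\bigcap_{x\in F}\Ker(\pi_x^2)$: the inclusion $\subseteq$ uses the invariance of $F$, and the reverse follows by specialising to $k=0$ at each $x\in F$. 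Since each $\Ker(\pi_x^2)=\Ker(\pi_x^1)$ is a primitive ideal by \cref{res:infinite_dimensional}\partref{aperiodic_point_algebraically_irreducible} and \partref{aperiodic_point_kernel_independent}, this realises $I$ as an intersection of primitive ideals.

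The main obstacle is the reliance on the closed-ideal\slash closed-invariant-subset correspondence in the free case: without this description, there is no clean route from a generic closed ideal of $\ellone$ to an intersection of primitive ideals, because the infinite dimensional primitive ideals are indexed rather implicitly via invariant subsets. The construction of a non-selfadjoint closed ideal from a periodic point is also technical, but it is already packaged as a single citation.
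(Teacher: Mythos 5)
Your handling of the purely algebraic cluster matches the paper's: \partref{intersection_general}$\Leftrightarrow$\partref{intersection_closed} via continuity of the inclusion, \partref{intersection_closed}$\Rightarrow$\partref{all_self_adjoint} from selfadjointness of closed ideals of a \Calgebra, \partref{all_intersection_of_primitive}$\Rightarrow$\partref{all_kernels_of_involutive_representations} via \cref{res:primitive_and_involutive}\partref{kernel_of_involutive_representation} and a Hilbert direct sum, and the return to \partref{intersection_closed} via \cref{res:retrieve_from_closure} (the paper goes \partref{all_intersection_of_primitive}$\Rightarrow$\partref{intersection_closed} through \cref{res:intersection_of_primitive_ok}, you go \partref{all_kernels_of_involutive_representations}$\Rightarrow$\partref{intersection_closed} directly through \cref{res:retrieve_from_closure}\partref{kernel_is_ok}; both close the cycle). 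Where you genuinely diverge is the dynamical part: the paper disposes of the entire equivalence \partref{all_intersection_of_primitive}$\Leftrightarrow$\partref{all_self_adjoint}$\Leftrightarrow$\partref{free} with a single citation of \cite[Theorem~4.4]{de_jeu_tomiyama:2016}, whereas you unpack it into \partref{all_self_adjoint}$\Rightarrow$\partref{free} (contrapositive, via the non-selfadjoint ideal attached to a periodic point) and \partref{free}$\Rightarrow$\partref{all_intersection_of_primitive} (via the closed-ideal\slash closed-invariant-subset correspondence plus your explicit, and correct, identification $I_F=\bigcap_{x\in F}\Ker(\pi_x^2)$).

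The one point to be careful about is that your key input for \partref{free}$\Rightarrow$\partref{all_intersection_of_primitive} --- that in the free case every closed ideal of $\ellone$ equals $I_F$ for some closed invariant $F$ --- is not actually more primitive than the implication you are proving. Given \cref{res:infinite_dimensional}\partref{aperiodic_point_kernel_independent} and your computation, an intersection of primitive ideals in the free case is precisely an ideal of the form $I_F$, so the invoked ``structural fact'' is essentially a restatement of \partref{free}$\Rightarrow$\partref{all_intersection_of_primitive} itself; it is the substance of \cite[Theorem~4.4]{de_jeu_tomiyama:2016}, not a lemma lying beneath it. You correctly flag this as the main obstacle, and the fix is simply to cite that theorem precisely (as the paper does) rather than gesturing at ``the earlier papers''. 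With that citation in place your argument is complete; without it, the step is a genuine gap, since nothing in the present paper supplies the correspondence. A similar, milder, remark applies to \partref{all_self_adjoint}$\Rightarrow$\partref{free}: the construction of a non-selfadjoint closed ideal from a periodic point is plausible and is indeed in \cite[Theorem~4.4]{de_jeu_svensson_tomiyama:2012}, but it is again exactly the content of the packaged equivalence, so your decomposition buys concreteness (the explicit formula $I_F=\bigcap_{x\in F}\Ker(\pi_x^2)$ is a nice addition) at the cost of two imprecise external dependencies where the paper needs only one precise one.
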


\begin{proof}
It is clear that~\partref{intersection_general} implies~\partref{intersection_closed}. 

Using the continuity of the inclusion of $\ellone$ in $\cstar$, an application of~\partref{intersection_closed} to $\elloneclosure{I}$ shows that~\partref{intersection_closed} implies~\partref{intersection_general}. 

Certainly~\partref{intersection_closed} implies~\partref{all_self_adjoint}, because all closed ideals of $\cstar$ are selfadjoint. 

The equivalence of~\partref{all_intersection_of_primitive},~\partref{all_self_adjoint}, and~\partref{free} is the content of \cite[Theorem~4.4]{de_jeu_tomiyama:2016}.

We know from part~\partref{kernel_of_involutive_representation} of \cref{res:primitive_and_involutive}
that every primitive ideal of $\ellone$ is the kernel of an involutive representation of $\ellone$ on a Hilbert space. Therefore, assuming \partref{all_intersection_of_primitive}, we see that~\partref{all_kernels_of_involutive_representations} holds by taking a Hilbert direct sum.

It is evident that~\partref{all_kernels_of_involutive_representations} implies~\partref{all_self_adjoint}. 

The proof will be complete once we show that~\partref{all_intersection_of_primitive} implies~\partref{intersection_closed}, and this is immediate from \cref{res:intersection_of_primitive_ok}.
\end{proof}







\bibliography{general_bibliography}


\end{document}